\documentclass[11pt,english]{amsart}
\usepackage{mathabx}
\usepackage{braket}
\usepackage{bbm}
\usepackage[T1]{fontenc}
\usepackage{amsfonts}
\usepackage{mathrsfs}
\usepackage{stmaryrd}
\usepackage{upgreek}
\usepackage{textcomp}
\usepackage[utf8]{inputenc}

\usepackage{extarrow}
\usepackage{enumerate}
\usepackage{enumitem}
\usepackage{braket}
\usepackage{graphicx}
\usepackage[colorlinks=true, linkcolor=blue]{hyperref}
\usepackage{latexsym}
\usepackage{mathtools}
\usepackage{mathtext}
\usepackage{amsmath}
\usepackage{amssymb}
\usepackage{amsthm}
\usepackage{tikz}
\usepackage{tikz-cd}
\usepackage[all,cmtip]{xy}
\usepackage[mathscr]{eucal}
\usepackage[toc,page]{appendix}
\usepackage{a4wide}
\usepackage[
backend=biber,
style=alphabetic-verb,
isbn=false, doi=false, url=false
]{biblatex}
\AtEveryBibitem{%
  \clearfield{addendum}%
  \clearfield{howpublished}%
}

\usepackage[a4paper,
  left=2.2cm,
  right=2.2cm,
  top=2.2cm,
  bottom=2.2cm]{geometry}
\usepackage{csquotes}
\MakeOuterQuote{"}

\usepackage[colorinlistoftodos]{todonotes}
\setuptodonotes{fancyline}
\usepackage{setspace} 

\usetikzlibrary{arrows}
\usetikzlibrary{matrix}
\usetikzlibrary{shapes}
\usetikzlibrary{snakes}
\usetikzlibrary{matrix}

\DeclareMathOperator{\et}{\textup{et}}

\usepackage{xcolor}
\definecolor{articlelink}{rgb}{0,0,0.45}
\hypersetup{linkcolor=articlelink,citecolor=articlelink,urlcolor=articlelink,
  pdftitle={Potential isotriviality of isocrystals and proper covers},
  pdfauthor={Adrian Langer and Lei Zhang}}
\theoremstyle{plain}
\newtheorem{thm}{Theorem}[section]
\newtheorem*{thm*}{Theorem}
\newtheorem{lem}[thm]{Lemma}
\newtheorem{cor}[thm]{Corollary}
\newtheorem{prop}[thm]{Proposition}

\theoremstyle{remark}
\newtheorem{rmk}[thm]{Remark}

\newtheorem*{rmk*}{Remark}
\newtheorem{ex}[thm]{Example}

\theoremstyle{definition}
\newtheorem{defn}{Definition}[section] 

\newtheorem*{const*}{Construction}
\newtheorem{conv}[defn]{Conventions}

\theoremstyle{plain}
\newtheorem{thmI}{Theorem}

\renewcommand{\et}{\textup{\'et}}

\newcommand{\Isoc}{{\rm Isoc}}

\newcommand{\Rep}{\text{\sf Rep}}

\AtEveryBibitem{\clearlist{language}}

\begin{document}
\title{Potential isotriviality of isocrystals and proper covers}

\author{Adrian Langer}
\email{alan@mimuw.edu.pl}
\address{University of Warsaw, Institute of Mathematics,
	ul.\ Banacha 2, 02-097 Warszawa, Poland}
	
\author{Lei Zhang}
\email{cumt559@gmail.com}
\address{Sun Yat-Sen University\\
    School of Mathematics (Zhuhai)\\    
    Zhuhai, 
    Guangdong Province\\ China}

\date{September 2026}

\begin{abstract}
We study convergent and overconvergent isocrystals that become trivial
after pullback along a proper surjective morphism.  On a geometrically
unibranch variety over an algebraically closed field, every such object is
already trivialized by a finite \'etale cover.  Over a perfect field, this
shows that a proper cover giving geometric triviality can be replaced by a
finite \'etale cover over the ground field, and implies finiteness of the
geometric monodromy group.  For proper geometrically unibranch
varieties, finite geometric
monodromy is also sufficient.  In the overconvergent case, a dominant
morphism can replace the given proper cover.  A nodal curve shows why the
geometrically unibranch hypothesis is needed.
\end{abstract}

\maketitle

\section*{Introduction}
Let $E$ be a convergent or overconvergent isocrystal on a variety $X$
over a perfect field of positive characteristic.  We study the relation
between trivializability by a proper surjection, trivializability by a
finite \'etale cover, and finiteness of geometric monodromy.  No Frobenius
structure is assumed.

Proposition~\ref{prop:proper-trivialization} gives the main geometric
result.  If the ground field is algebraically closed and $X$ is connected
and geometrically unibranch, every proper trivialization can be replaced
by a finite \'etale one.  No properness assumption on $X$ is needed.
The proof uses Stein factorization and effective descent to obtain a finite
cover.  After passing to a Galois closure, the inertia groups act trivially
on horizontal sections, so the representation factors through an \'etale
quotient.

We call $E$ \emph{potentially isotrivial} if its geometric monodromy
group is finite (Section~\ref{sec:conventions}).  Combining the proposition
with Lazda--P\'al's theorem for smooth projective varieties
\cite[Thm.~13.4]{LP17} gives the following criterion.

\begin{samepage}
\begin{thmI}\label{main}
Let $X$ be a proper, geometrically connected, geometrically unibranch
variety over a perfect field $k$ of characteristic $p>0$, and let
$E$ be a convergent or overconvergent isocrystal on $X/K$.
Then the following conditions are equivalent:
\begin{enumerate}
\item the geometric monodromy group of $E$ is finite;
\item a finite \'etale cover of $X_{\overline k}$ trivializes
      $E_{\overline k}$;
\item there is a finite \'etale surjection $v\colon Y\to X$ such that
      $v^*E$ is geometrically trivial;
\item there is a proper surjective morphism $f\colon Y\to X$ such that
      $f^*E$ is geometrically trivial.
\end{enumerate}
\end{thmI}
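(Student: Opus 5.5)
We prove the cycle of implications $(1)\Rightarrow(2)\Rightarrow(3)\Rightarrow(4)\Rightarrow(1)$. Throughout we use that the category of (over)convergent isocrystals on $X_{\overline k}$ generated by $E_{\overline k}$ is Tannakian, with geometric monodromy group $G$ (Section~\ref{sec:conventions}). The step $(3)\Rightarrow(4)$ is trivial, since a finite étale surjection is proper.

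For $(2)\Rightarrow(3)$, let $Y'\to X_{\overline k}$ be a finite étale cover trivializing $E_{\overline k}$. Since it is of finite presentation, it is defined over some finite extension $k'/k$, say as $Y'_0\to X_{k'}$, after enlarging $k'$. Set $Y:=Y'_0$, viewed as an $X$-scheme via $X_{k'}\to X$. This map is finite étale because $k$ is perfect, and it is surjective. Then $v^*E$ becomes trivial after base change to $\overline k$, so it is geometrically trivial.

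For $(4)\Rightarrow(2)$, base change to $\overline k$. Geometric triviality of $f^*E$ means $(f^*E)_{\overline k}$ is trivial on $Y_{\overline k}$, possibly on each connected component. The morphism $f_{\overline k}\colon Y_{\overline k}\to X_{\overline k}$ is still proper and surjective. Since $X$ is geometrically connected and geometrically unibranch, $X_{\overline k}$ is connected and unibranch. Proposition~\ref{prop:proper-trivialization} then yields a finite étale cover of $X_{\overline k}$ trivializing $E_{\overline k}$.

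For $(2)\Rightarrow(1)$, we may take the finite étale trivializing cover $Y'\to X_{\overline k}$ to be connected and Galois with group $H$. The functor $v^*$ sends every object of $\langle E_{\overline k}\rangle$ to a trivial object. Taking $H$-invariants of horizontal sections on $Y'$ identifies $\langle E_{\overline k}\rangle$ with a subcategory of $\Rep(H)$ closed under subquotients, via Galois descent for isocrystals along finite étale maps. Hence $G$ is a quotient of the constant group $H$, and so it is finite.

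The remaining step, $(1)\Rightarrow(2)$, is the main obstacle. Here we invoke Lazda--Pál \cite[Thm.~13.4]{LP17}, but only for smooth projective varieties. The plan has three parts.

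\begin{enumerate}
\item A finite $G$ makes $E_{\overline k}$ correspond to a $G$-torsor in the Tannakian sense, i.e.\ a fiber functor twisted by $G$. It should be representable by a finite étale $G$-torsor once we know isocrystals whose monodromy is finite étale arise from finite étale covers.
\item To establish this, apply de Jong's alterations to get a proper surjective generically étale $h\colon Z\to X_{\overline k}$ with $Z$ smooth projective. The monodromy of $h^*E$ is a subgroup of $G$, hence finite. Lazda--Pál then gives a finite étale cover $Z'\to Z$ trivializing $h^*E$.
\item The composite $Z'\to X_{\overline k}$ is proper and surjective and trivializes $E_{\overline k}$, so Proposition~\ref{prop:proper-trivialization} again produces a finite étale trivialization over $X_{\overline k}$.
\end{enumerate}

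Properness of $X$ is used exactly here, to make $Z$ projective. The delicate points in this step are twofold. First, Lazda--Pál applies in the convergent and overconvergent settings, which coincide on the proper $Z$. Second, we must check that finiteness of monodromy passes to the pullback to $Z$; this holds since pullback induces a closed immersion of monodromy groups.
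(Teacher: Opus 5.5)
Your proof is correct and is essentially the paper's argument: Proposition~\ref{prop:proper-trivialization} after base change for (4)$\Rightarrow$(2), descent along a connected Galois cover for (2)$\Rightarrow$(1), and de Jong's alteration, then Lazda--P\'al, then Proposition~\ref{prop:proper-trivialization} for (1)$\Rightarrow$(2); your torsor-representability remark in part 1 is not needed. The one step you state too quickly is (2)$\Rightarrow$(3): $Y_{\overline k}=Y'_0\times_k\overline k$ is a disjoint union of the $[k':k]$ Galois conjugates of $Y'$, so triviality on the components other than $Y'$ requires the conjugation argument, which uses that $\overline E$ descends to $E$, as the paper does (also, before applying de Jong you should replace $X_{\overline k}$ by its reduction, which is integral by the unibranch hypothesis).
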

\end{samepage}

For a possibly nonproper $X$, conditions (2)--(4) remain equivalent and
imply (1)
(Theorem~\ref{thm:proper-cover-geometric-triviality}).  The reverse
implication is a finite-\'etale realization question; see
Remark~\ref{rmk:finite-monodromy-realization}.  For overconvergent
isocrystals, full faithfulness of restriction to a dense open subset
allows us to replace a proper surjection by any dominant morphism of
varieties (Corollary~\ref{cor:dominant-cover-potential-isotrivial}).

Related trivialization questions for vector bundles were studied by
Biswas--dos Santos \cite{BDS11,BDS12} and Tonini--Zhang \cite{TZ3}.
For $F$-divided sheaves, Tonini--Zhang \cite{TZ2} prove essential
finiteness under dominant trivializations.  The results below concern
convergent and overconvergent isocrystals with coefficients in a
mixed-characteristic field.

Finally,
Example~\ref{ex:nodal-potential-isotrivial-descent} exhibits a nontrivial
rank-one object on a nodal curve whose normalization trivializes it.
Its infinite monodromy shows that the unibranch hypothesis cannot be
omitted. This example also appears in \cite[Ex.~7.1]{Tsu03}, where it is used to show that the restriction map in rigid cohomology to a dense open subset need not be surjective when the ambient variety is not geometrically unibranch.

\section{Conventions and geometric monodromy}
\label{sec:conventions}

Throughout, $k$ is a perfect field of characteristic $p>0$, and
$V$ is a complete discrete valuation ring of mixed characteristic
$(0,p)$, with residue field $k$ and fraction field $K$.
A \emph{variety} is a separated $k$-scheme of finite type; it need not
be reduced or irreducible.  A proper cover means a proper surjective
morphism to the indicated base.

We write $\Isoc^{\dagger}(X/K)$ for convergent isocrystals and
$\Isoc^{\dagger\dagger}(X/K)$ for overconvergent isocrystals, without a
chosen Frobenius structure.  These are $K$-linear abelian rigid tensor
categories.  Their objects have locally constant finite rank, and
pullback is exact and preserves rank; see
\cite[Prop.~1.3.3 and Sections~2.3.3(ii)--(iv), 2.3.4]{Berthelot96}.
Here Proposition~1.3.3 gives connectedness of tubes, and Section~2.3.4
compares the convergent category with Ogus's definition.  The flatness
condition in Section~2.3.3(iv) concerns the coefficient base, which is
fixed here.  Evaluation at a $k$-rational point $x$ is denoted by
$\omega_x$; when $X$ is connected, Lemma~\ref{lem:pullback-faithful}
and the observation following it show that this is a neutral fibre functor.
We use
$\langle E\rangle^\otimes$ for the strictly full subcategory generated
by $E$ under finite direct sums, tensor products, duals and
subquotients.  An object of rank $r$ is \emph{trivial} if it is
isomorphic to $\mathbf 1_X^{\oplus r}$.

For $\star\in\{\dagger,\dagger\dagger\}$, put
\[
    \mathscr C^\star(X)\coloneqq \Isoc^\star(X/K).
\]
Fix an algebraic closure $\overline k$ of $k$, and let
$V_{\overline k}=\widehat{V^{\mathrm{nr}}}$ and
$K_{\overline k}=\operatorname{Frac}(V_{\overline k})$, where the
completion is taken after choosing a maximal unramified extension with
residue field $\overline k$.
For a geometrically connected $k$-variety $X$, write
\[
  \overline X=X\times_k\overline k,
  \qquad
  \overline E=E_{\overline k}
      \in\Isoc^\star(\overline X/K_{\overline k}).
\]
Here $E_{\overline k}$ denotes the image of $E$ under simultaneous
base change of the residue and coefficient fields (with the corresponding completion on coefficient rings).
After choosing a geometric point $\overline x\in\overline X(\overline k)$, the
\emph{geometric monodromy group} of $E$ is
\[
 G_{\mathrm{geom}}(E,\overline x)
 =\operatorname{Aut}^{\otimes}
   \bigl(\omega_{\overline x}|_{\langle\overline E\rangle^\otimes}\bigr).
\]
Whether this affine $K_{\overline k}$-group scheme is finite or trivial is
independent of the chosen geometric point (and of a further faithful scalar
extension).  We call $E$
\emph{potentially isotrivial} if
$G_{\mathrm{geom}}(E,\overline x)$ is finite, and
\emph{geometrically trivial} if this group is trivial.  The latter condition
is equivalent to
\[
       \overline E\simeq
       \mathbf 1_{\overline X}^{\oplus\operatorname{rk}(E)}.
\]
For a scheme whose geometric base change is disconnected, geometric
triviality means triviality on every connected component.  The statements
below can likewise be read componentwise if $X$ is not geometrically
connected.

\begin{lem}[Faithfulness of pullback]
\label{lem:pullback-faithful}
Let $f\colon Y\to X$ be a morphism of $k$-varieties whose image meets
every connected component of $X$.  Then pullback on convergent or
overconvergent isocrystals is faithful.
\end{lem}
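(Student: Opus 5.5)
Faithfulness of $f^*$ means that if $\varphi\colon E\to F$ is a morphism in $\mathscr C^\star(X)$ with $f^*\varphi=0$, then $\varphi=0$. Since the categories are abelian and $f^*$ is exact, $f^*$ commutes with images. So it suffices to prove the following: \emph{if $G\in\mathscr C^\star(X)$ satisfies $f^*G=0$, then $G=0$.} We then apply this to $G=\operatorname{im}\varphi$.

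To see that $G=0$, use that objects have locally constant rank and that pullback preserves rank (as recalled in the conventions). The rank of $G$ is constant on each connected component $X_i$ of $X$. By hypothesis there is a point $y\in Y$ with $f(y)\in X_i$. The rank of $f^*G$ near $y$ equals the rank of $G$ on $X_i$, and it is $0$ because $f^*G=0$. Hence $G$ has rank $0$ on every component.

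An isocrystal of rank $0$ is zero: its realization on the tubes is a coherent module that is locally free of rank $0$, hence vanishes. This holds in both the convergent and overconvergent settings, since the realization on each frame determines the object. Therefore $G=0$ and $\varphi=0$.

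The main care point is making precise that "rank" is compatible with pullback at points, including closed points with residue field extensions. The most robust route is to work with a closed point $y$ and reduce to evaluation at that point. Base-changing to the residue field $k(y)$ (finite over $k$, hence perfect) gives a point $x=f(y)$, and the fibre functor at $x$ after the finite extension is faithful and exact on the component containing $x$. Its dimension is the rank, which is preserved by $f^*$ because the fibre of $f^*G$ at $y$ is the fibre of $G$ at $x$ tensored up. This also yields the remark after the lemma: when $X$ is connected and $x$ is $k$-rational, $\omega_x$ is the pullback along $x\colon\operatorname{Spec}k\to X$, whose image meets the unique component. Hence $\omega_x$ is faithful, and being exact and tensor it is a neutral fibre functor.

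The only non-formal input is that the zero object is detected by rank, i.e. that locally constant rank $0$ forces vanishing. This is immediate from the coherence and local freeness cited from Berthelot.
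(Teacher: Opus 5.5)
Your proposal is correct and is essentially the paper's proof. Both arguments use exactness to pass to $\operatorname{im}\varphi$, then use preservation of rank under pullback together with local constancy of rank to show that this image has rank zero on every connected component, hence vanishes. One minor caution: your aside that the fibre functor at $x$ is faithful is really a consequence of this lemma (as in the paper's remark following it), not an input to it; since you only need that the fibre's dimension equals the rank, your argument is not circular.
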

\begin{proof}
Let $u\colon E\to F$ be a morphism with $f^*u=0$.  Exactness gives
$f^*(\operatorname{im}u)=0$.  Since pullback preserves rank, the object
$\operatorname{im}u$ has rank zero at a point of every connected
component of $X$.  Its rank is locally constant, so
$\operatorname{im}u=0$ on $X$, and hence $u=0$.
\end{proof}

In particular, if $X$ is connected and $x\in X(k)$, then
\[
 x^*\colon\operatorname{End}(\mathbf 1_X)
       \longrightarrow
       \operatorname{End}(\mathbf 1_{\operatorname{Spec}k})=K
\]
is injective by the lemma.  Its image contains the scalar endomorphisms,
so it is an isomorphism.  Evaluation at $x$ is an exact faithful
$K$-linear tensor functor to finite-dimensional $K$-vector spaces;
thus the category is neutral Tannakian.  This applies, in particular, to
every nonempty connected variety over an algebraically closed field.

\section{Proper trivializations}
\label{sec:proper-trivializations}

The following proposition is the geometric input for the monodromy
statements.  Its proof applies to both coefficient categories.

\begin{prop}[Proper trivializations over an algebraically closed field]
\label{prop:proper-trivialization}
Assume that $k$ is algebraically closed.  Let $X$ be a connected
geometrically unibranch $k$-variety, let
$\star\in\{\dagger,\dagger\dagger\}$, and let
$E\in\mathscr C^\star(X)$.  If there are a $k$-scheme $Y$ and a
proper surjective morphism $f\colon Y\to X$ such that
\[
\refstepcounter{equation}
       f^*E\simeq\mathbf 1_Y^{\oplus n},
       \qquad n=\operatorname{rk}(E),
       \label{eq:proper-trivialization}
\tag{\theequation}
\]
then some finite \'etale surjection $z\colon Z\to X$ trivializes $E$.
\end{prop}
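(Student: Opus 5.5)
We follow the strategy sketched in the introduction: first reduce to a finite cover, then to a Galois one, and finally kill inertia. \textbf{Step 1 (reduction to a finite cover).} We may replace $Y$ by a closed subscheme that is still proper and surjective over $X$; for instance, take finitely many irreducible components of $Y_{\mathrm{red}}$ dominating the components of $X$ together with enough of the remaining components. Take the Stein factorization
\[
Y\xrightarrow{g}Y'\xrightarrow{h}X,\qquad g_*\mathcal O_Y=\mathcal O_{Y'},
\]
with $h$ finite and surjective. We want to show that $h^*E$ is trivial. The map $g$ is proper and surjective with geometrically connected fibres, so it is an effective descent morphism for both $\mathscr C^\star$ (proper surjective descent of Ogus/Shiho for convergent isocrystals, and Zhang--Lazda type results for overconvergent ones). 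It therefore suffices to check that the descent datum on $f^*E=g^*h^*E\simeq \mathbf 1_Y^{\oplus n}$ is the trivial one. For this, note that the horizontal sections of $\mathbf 1$ on $Y\times_{Y'}Y$ are constants by connectedness of fibres: the isomorphism $\operatorname{End}(\mathbf 1)=K$ on each connected component, as in the observation after Lemma~\ref{lem:pullback-faithful}, together with connectedness of $Y\times_{Y'}Y$ over each connected component of $Y'$. Hence the descent datum is a constant matrix satisfying the cocycle condition, and it can be taken to be the identity after a change of basis. So $h^*E$ is trivial on each connected component of $Y'$. Replacing $Y'$ by a suitable irreducible component dominating $X$, which is possible since $X$ is unibranch and hence irreducible, we obtain a finite surjective $h\colon Y'\to X$ from an integral $Y'$ with $h^*E$ trivial.

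\textbf{Step 2 (Galois closure).} Let $X^{\nu}$ be the normalization of $X$; since $X$ is geometrically unibranch, $X^\nu\to X$ is a universal homeomorphism. Isocrystals are invariant under universal homeomorphisms (a standard fact for both categories), so we may assume $X$ normal. Let $L$ be a normal closure of the extension $k(Y')/k(X)$, taking its separable part, and handle the purely inseparable part again by universal homeomorphism invariance. Let $W$ be the normalization of $X$ in $L$. Then $W\to X$ is finite with Galois group $G$ and $X=W/G$, and $E_W$ is trivial because $W$ dominates $Y'$ up to a radicial map. The $G$-action on the $n$-dimensional space $V=\operatorname{Hom}(\mathbf 1_W,E_W)$ of horizontal sections yields a representation $\rho\colon G\to GL(V)$. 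By descent along the finite surjection $W\to X$ (Step~1 again), $E$ is recovered as the descent of $\mathbf 1_W\otimes V$ with this $G$-action.

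\textbf{Step 3 (inertia acts trivially).} This is the main obstacle. Let $w\in W$ be a closed point with inertia group $I_w\subset G$, and let $\sigma\in I_w$. Pulling back to the point $w$, which $\sigma$ fixes, the descent datum restricted to $w$ compares the fibre of $E$ at $h(w)$ with itself via $\sigma$. Since $E$ comes from $X$ and $\sigma$ fixes $w$, compatibility gives $\omega_w(\rho(\sigma))=\mathrm{id}$. Because $\omega_w$ is faithful on horizontal sections (Lemma~\ref{lem:pullback-faithful} applied to $w\to W$), we get $\rho(\sigma)=\mathrm{id}$. The care needed here is to justify that the descent isomorphism over $W\times_XW$ restricted to the point $(w,\sigma w)=(w,w)$ is the identity of the fibre. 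Let $N\subset G$ be the normal subgroup generated by all inertia groups. Then $\rho$ factors through $G/N$, and $Z=W/N\to X$ is finite étale, since $G/N$ acts freely on $W/N$ and $X$ is normal. Finally, $E_Z$ is the descent of a trivial object with trivial $N$-action, hence trivial, which gives the desired finite étale surjection $z\colon Z\to X$.
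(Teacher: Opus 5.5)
Your overall route is the paper's: Stein factorization with the transported descent datum shown to be trivial, reduction to a separable extension by topological invariance, a Galois closure $W\to X$, triviality of inertia on $V$ via a closed point with residue field $k$ and faithfulness, and a quotient by a normal subgroup containing all inertia. Using your $N$ instead of the paper's $H=\ker\rho$ is harmless, since $N\subseteq\ker\rho$ and both quotients are finite \'etale over $X$. There is, however, a gap in the final step, and in the corresponding sentence of Step~2 (``$E$ is recovered as the descent of $\mathbf 1_W\otimes V$ with this $G$-action'').

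You treat an $N$-equivariant structure on $\mathbf 1_W\otimes V$ as if it were a descent datum along $s\colon W\to Z=W/N$. But $s$ is in general ramified, so $W\times_ZW$ is not $\coprod_{n\in N}W$. Transport the canonical \v{C}ech datum through a basis of $V$ to get $\beta\in\operatorname{Aut}(\mathbf 1^{\oplus n}_{W\times_ZW})$. Triviality of the $N$-action on $V$ only says that the pullback of $\beta$ along each graph $\Gamma_n\colon W\to W\times_ZW$ is the identity. The missing argument, which the paper gives explicitly, is as follows. The map $\coprod_{n\in N}W\to W\times_ZW$ is surjective, because geometric fibres of $W\to W/N$ are $N$-orbits. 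Hence pullback along it is faithful by Lemma~\ref{lem:pullback-faithful}, so $\beta=\mathrm{id}$. Only then does effective descent along the finite surjection $s$ give $z^*E\simeq\mathbf 1_Z^{\oplus n}$.

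There are also some smaller points.
In Step~1, a change of basis conjugates a constant datum and cannot turn a nontrivial one into the identity. The correct reason is that the datum pulls back to the identity along the diagonal, and constants are detected there.
In Step~3, the point you flag is immediate: $\kappa(w)=k$ and $\sigma$ is a $k$-automorphism fixing $w$, so $\Gamma_\sigma$ and the diagonal pass through the same $k$-point of $W\times_XW$.
Your inertia argument only treats closed points. Either let $N$ be generated by those inertia groups (the non-\'etale locus of $Z\to X$ is closed, so closed points suffice), or note, as the paper does, that an inertia element at any point fixes a closed point of its scheme-theoretic fixed locus.
Finally, you should transport the cover and trivialization back from $X^\nu$ to $X$ by topological invariance.
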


\begin{proof}
We use three standard facts.  Proper surjections are morphisms of effective
descent for convergent and overconvergent isocrystals
\cite[Thm.~4.6]{Ogus84} and \cite[Thm.~5.1]{Laz22}; pullback from a connected
variety to a nonempty variety is faithful by
Lemma~\ref{lem:pullback-faithful}; and pullback by a finite universal
homeomorphism is an equivalence
\cite[Cor.~4.10]{Ogus84} and \cite[Thm.~6.1]{Laz22}.

We first reduce to the normal integral case.  Passing to
$X_{\mathrm{red}}$ and then to its normalization $X^\nu$ does not change
either the category under consideration or the category of finite \'etale
covers: varieties over a field are excellent, so the normalization is
finite, and the relevant maps are finite universal homeomorphisms
\cite[\href{https://stacks.math.columbia.edu/tag/0GIQ}{0GIQ}]{stacks-project}, and finite \'etale covers
are invariant under universal homeomorphisms
\cite[\href{https://stacks.math.columbia.edu/tag/0BQN}{0BQN}]{stacks-project}.  Here we use that a
connected geometrically unibranch variety has irreducible reduction.
Replace $X$ by $X^\nu$ and $Y$ by $Y\times_X X^\nu$; then choose an
irreducible component of the latter dominating $X^\nu$, and replace it by
its reduction and normalization.  Properness shows that it still surjects,
and \eqref{eq:proper-trivialization} is preserved.  We may therefore assume
that both $X$ and $Y$ are normal and integral.  At the end, invariance
of finite \'etale covers and the equivalences on isocrystals transport both
the resulting cover and its trivialization uniquely back to the original
$X$.

Consider the Stein factorization
\[
       Y\xrightarrow{q}S\xrightarrow{p}X.
\]
The morphism $q$ is proper and surjective with geometrically connected
fibres, whereas $p$ is finite and surjective
\cite[\href{https://stacks.math.columbia.edu/tag/03H0}{03H0}]{stacks-project}.  Put $F=p^*E$ and fix
an isomorphism
\[
       \theta\colon q^*F\xrightarrow{\sim}\mathbf 1_Y^{\oplus n}.
\]
The scheme $R=Y\times_S Y$ is connected: the first projection $R\to Y$
is proper and surjective with geometrically connected fibres, and a
nontrivial open-and-closed decomposition of $R$ would therefore induce
one of $Y$.  Transporting the canonical descent datum on $q^*F$ through
$\theta$ gives an automorphism
\[
       \alpha\colon\mathbf 1_R^{\oplus n}
                    \xrightarrow{\sim}\mathbf 1_R^{\oplus n}.
\]
Its pullback along the diagonal $\Delta\colon Y\to R$ is the identity.
Since $R$ is connected and $Y$ is nonempty, Lemma
\ref{lem:pullback-faithful} says that $\Delta^*$ is faithful.  Hence
$\alpha=\mathrm{id}$, so $\theta$ is compatible with the descent datum.
Effective descent along $q$ now yields
\[
\refstepcounter{equation}
       p^*E\simeq\mathbf 1_S^{\oplus n}.
       \label{eq:finite-trivialization}
\tag{\theequation}
\]

It remains to replace the finite cover in
\eqref{eq:finite-trivialization} by a finite \'etale cover.  Write
\[
       K_X=k(X),\qquad L=k(S),
\]
and let $L_{\mathrm{sep}}$ be the maximal separable subextension of
$L/K_X$.  If $S_{\mathrm{sep}}$ is the normalization of $X$ in
$L_{\mathrm{sep}}$, then $S\to S_{\mathrm{sep}}$ is a finite universal
homeomorphism.  Topological invariance shows that $E|_{S_{\mathrm{sep}}}$
is trivial.  We may thus assume that $L/K_X$ is separable.

Choose a finite Galois extension $M/K_X$ containing $L$, let $T$ be
the normalization of $X$ in $M$, and set
\[
       G=\operatorname{Gal}(M/K_X),\qquad \pi\colon T\to X.
\]
Then $\pi^*E$ is trivial.  Put
\[
 W=\operatorname{Hom}_{\mathscr C^\star(T)}(\mathbf 1_T,\pi^*E).
\]
Since $k$ is algebraically closed and $T$ is connected,
$\operatorname{End}(\mathbf 1_T)=K$, and $W\simeq K^{\oplus n}$.
The canonical $G$-linearization of $\pi^*E$ induces a $K$-linear
action
\[
       \rho\colon G\longrightarrow\operatorname{GL}_K(W).
\]
Let $H=\ker(\rho)$; it is a normal subgroup of $G$.

Every inertia group of $T/X$ lies in $H$.  Indeed, if $g$ belongs to
an inertia group, its scheme-theoretic fixed locus on $T$ is nonempty;
choose a closed point $t$ in that locus.  Since $k$ is algebraically
closed, $\kappa(t)=k$, and $g$ acts trivially on the residue field.  For
each $v\in W$, the canonical linearization therefore gives
\[
       t^*(\rho(g)v)=t^*v.
\]
Pullback to $t$ is faithful by Lemma~\ref{lem:pullback-faithful}, so
$\rho(g)v=v$ for every $v$, as asserted.

Since $H$ is normal, $M^H/K_X$ is Galois.  Let $Z$ be the
normalization of $X$ in $M^H$; then $Z=T/H$, and
\[
       T\xrightarrow{s}Z\xrightarrow{z}X.
\]
The inertia group of $Z/X$ at any point is a quotient of the corresponding
inertia group of $T/X$
\cite[\href{https://stacks.math.columbia.edu/tag/0BSX}{0BSX}]{stacks-project}.  It is therefore trivial,
and $z$ is finite \'etale
\cite[\href{https://stacks.math.columbia.edu/tag/0BTF}{0BTF}]{stacks-project}.

Finally, a $K$-basis of $W$ gives an $H$-equivariant isomorphism
\[
       \psi\colon\mathbf 1_T^{\oplus n}\xrightarrow{\sim}\pi^*E.
\]
Because $s$ can be ramified, equivariance on the graphs of elements of
$H$ is not by itself the full descent condition.  Let
$R_H=T\times_ZT$, and transport the canonical descent datum through
$\psi$, obtaining
\[
       \beta\in
       \operatorname{Aut}_{\mathscr C^\star(R_H)}
       (\mathbf 1_{R_H}^{\oplus n}).
\]
Its restriction to the graph $\Gamma_h$ is the identity for every
$h\in H$.  Since $Z=T/H$, the map
\[
       \coprod_{h\in H}T\longrightarrow T\times_ZT
\]
is surjective on underlying topological spaces: two geometric points of
$T$ have the same image in the finite quotient precisely when they belong
to the same $H$-orbit.  Pullback along this morphism is faithful by
Lemma~\ref{lem:pullback-faithful}.  Since $\beta$ restricts to the identity
on each graph, it follows that $\beta=\mathrm{id}$.  Thus $\psi$ satisfies the full \v Cech descent
condition.  Effective descent along the finite surjection $s$ yields
\[
       z^*E\simeq\mathbf 1_Z^{\oplus n}.
\]
The finite \'etale cover $z$ trivializes $E$, completing the proof.
\end{proof}

\begin{rmk}[Quotient stacks]
\label{rmk:quotient-stacks}
The final descent step can be expressed in terms of quotient stacks.
Convergent isocrystals on algebraic stacks are defined in
\cite[Section~2.1]{Shiho2010}; Example~2.3 identifies the category on a
quotient by a finite \'etale group with equivariant isocrystals on its
atlas.  For overconvergent isocrystals, see \cite{Brown2010}; for the
finite quotients used here, equivariant objects and \'etale effective
descent suffice.

In the notation of the proof, let $\mathcal Z=[T/H]$ and
$c\colon\mathcal Z\to Z=T/H$.  The map $T\to\mathcal Z$ is a finite
\'etale $H$-torsor, even when $T\to Z$ is ramified.  Since $H$
acts trivially on $W$, a basis of $W$ trivializes the pullback of
$z^*E$ to $\mathcal Z$.  The graph-cover argument in the proof also
shows that $c^*$ is fully faithful: an $H$-equivariant morphism
between objects pulled back from $Z$ satisfies the full \v Cech
condition after pullback along
\[
 \coprod_{h\in H}T\longrightarrow T\times_ZT,
\]
and Lemma~\ref{lem:pullback-faithful} detects this equality before
pullback.  Effective descent then gives a unique morphism on $Z$.
The trivialization and its inverse therefore descend to $Z$.

This is analogous to the quotient-stack argument of
\cite[Sections~3.6--3.8]{TZ3}.  The inertia calculation still supplies
the essential geometric conclusion that $Z\to X$ is finite \'etale:
the morphism $\mathcal Z\to X$ itself need not be representable.
\end{rmk}

\section{Proper covers and finite geometric monodromy}
\label{sec:potential-isotriviality}

\begin{thm}[Proper covers and geometric triviality]
\label{thm:proper-cover-geometric-triviality}
Let $X$ be a geometrically connected, geometrically unibranch
$k$-variety, let $\star\in\{\dagger,\dagger\dagger\}$, and let
$E\in\mathscr C^\star(X)$.  The following conditions are equivalent:
\begin{enumerate}
    \item there is a finite \'etale surjection
    $u\colon X'\to\overline X$ such that $u^*\overline E$ is trivial;
    \item there is a finite \'etale surjection $v\colon Y\to X$ such that
    $v^*E$ is geometrically trivial;
    \item there are a $k$-scheme $Y$ and a proper surjective morphism
    $f\colon Y\to X$ such that $f^*E$ is geometrically trivial.
\end{enumerate}
Under these conditions, $G_{\mathrm{geom}}(E,\overline x)$ is a constant
finite group scheme over $K_{\overline k}$.  In particular, $E$ is potentially isotrivial. 
\end{thm}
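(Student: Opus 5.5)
We prove the cycle $(2)\Rightarrow(3)\Rightarrow(1)\Rightarrow(2)$ and then read off the structure of $G_{\mathrm{geom}}$ from (1).

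\emph{Step $(2)\Rightarrow(3)$.} This is immediate, since a finite \'etale surjection is proper and surjective.

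\emph{Step $(3)\Rightarrow(1)$.} Base change everything to $\overline k$. The morphism $\overline f\colon\overline Y\to\overline X$ is proper and surjective. Geometric triviality of $f^*E$ means that $\overline f^*\overline E$ is trivial on each connected component of $\overline Y$ (componentwise convention). Since $\overline X$ is connected and unibranch, we may take an irreducible component of $\overline Y$; after passing to its reduction and normalization, exactly as in the proof of Proposition~\ref{prop:proper-trivialization}, we obtain a connected $\overline Y'$ that still surjects onto $\overline X$. On this $\overline Y'$ the pullback of $\overline E$ is isomorphic to $\mathbf 1^{\oplus n}$. Proposition~\ref{prop:proper-trivialization} then gives a finite \'etale $u\colon X'\to\overline X$ trivializing $\overline E$. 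If one prefers to avoid the component choice, note that the Proposition's proof already passes to a dominating irreducible component, so the hypothesis can be arranged on a connected piece.

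\emph{Step $(1)\Rightarrow(2)$.} This is the main obstacle, because $u$ has to be descended to a finite extension of $k$.
\begin{enumerate}
\item Shrink $X'$ to a connected component; it still surjects because $u$ is finite \'etale and $\overline X$ is connected. Then replace it by its Galois closure, which trivializes as well.
\item The cover $X'\to\overline X$ is of finite presentation, so it descends to a finite \'etale $Y_1\to X_{k'}$ for some finite extension $k'/k$. Since $k$ is perfect, $X_{k'}\to X$ is finite \'etale, and composing gives a finite \'etale surjection $v\colon Y_1\to X$.
\item We must check that $v^*E$ is geometrically trivial. The base change $\overline{Y_1}$ is a disjoint union of copies of Galois-conjugates of $X'$, indexed by the embeddings $k'\to\overline k$. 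On the component corresponding to $X'$ itself, $\overline E$ becomes trivial by construction. On a conjugate $\sigma(X')$, use that $\overline E$ comes from $X$ and is therefore $\sigma$-stable: apply $\sigma$, acting semilinearly on both $\overline k$ and $K_{\overline k}$, which is compatible with base change of isocrystals. This transports the triviality from $X'$ to $\sigma(X')$.
\end{enumerate}
I expect the subtle point to be the compatibility of Galois twisting with the completed coefficient change $K\to K_{\overline k}$. If this is awkward, a cleaner alternative is to use that the finite étale covers of $\overline X$ trivializing $\overline E$ form a cofinal family, and that one of them is defined over a finite extension of $k$.

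\emph{The group $G_{\mathrm{geom}}$.} With $u$ Galois with group $\Gamma$ and $X'$ connected, repeat the argument of Proposition~\ref{prop:proper-trivialization} with $H$ trivial. The space $W=\operatorname{Hom}(\mathbf 1,u^*\overline E)$ carries a representation $\rho\colon\Gamma\to\mathrm{GL}(W)$. By \'etale descent together with full faithfulness (which follows from Lemma~\ref{lem:pullback-faithful} and effective descent), $\langle\overline E\rangle^\otimes$ becomes equivalent to the subcategory of $\Gamma$-representations generated by $W$, and this equivalence is compatible with $\omega_{\overline x}$ once a lift of $\overline x$ to $X'$ is chosen. Consequently $G_{\mathrm{geom}}(E,\overline x)$ is the constant group scheme $\rho(\Gamma)$. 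In particular it is finite, so $E$ is potentially isotrivial.
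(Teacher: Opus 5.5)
Your proposal is correct and follows the paper's proof essentially step for step: the same cycle of implications, the same passage to a dominating component before invoking Proposition~\ref{prop:proper-trivialization}, the same descent of a connected cover to some $X_{k'}$ with the Galois conjugates handled by the semilinear action, and the same identification of the objects trivialized by a Galois cover with $\operatorname{Rep}_{K_{\overline k}}(\Gamma)$, giving $G_{\mathrm{geom}}(E,\overline x)\simeq\rho(\Gamma)_{K_{\overline k}}$. The one point you leave implicit, which the paper states explicitly, is that the subcategory of objects trivialized by $u$ is closed under subquotients (by exactness of pullback and because a subquotient of a trivial object is trivial); this is what puts all of $\langle\overline E\rangle^\otimes$ inside the descent picture.
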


\begin{proof}
We first prove $(1)\Rightarrow(2)$.  Let
$u\colon X'\to\overline X$ be a finite \'etale cover
trivializing $\overline E$.  Since $\overline X$ is connected, any
connected component of $X'$ still surjects onto $\overline X$, so we
may assume that $X'$ is connected.  After replacing a finite extension
$k'/k$ by its Galois closure, the cover descends to a finite \'etale cover
$u_0\colon X'_0\to X_{k'}$.  Regard $X'_0$ as a $k$-scheme and
let
\[
       v\colon Y=X'_0\xrightarrow{u_0}X_{k'}\longrightarrow X.
\]
Since $k$ is perfect, $v$ is finite \'etale and surjective.  The connected components of $Y_{\overline k}$ are the
conjugates of $X'$ indexed by the $k$-embeddings
$k'\hookrightarrow\overline k$.  Extend each such embedding to an
element of $\operatorname{Gal}(\overline k/k)$.  Its semilinear action
on $K_{\overline k}$, together with the descent of $\overline E$ to
$E$, carries the given trivialization to one on the corresponding
component. Thus $v^*E$ is geometrically trivial.

The implication $(2)\Rightarrow(3)$ is trivial.  To prove $(3)\Rightarrow(1)$, base change $f$ to
$\overline k$.  The morphism
$\overline f\colon Y_{\overline k}\to\overline X$ remains proper and
surjective, and $\overline f^*\overline E$ is trivial on every connected
component.  Proposition~\ref{prop:proper-trivialization}, applied after
replacing $(k,K)$ by $(\overline k,K_{\overline k})$ and choosing a
component dominating $\overline X$, gives a finite \'etale cover of
$\overline X$ trivializing $\overline E$.

We now prove the assertion about monodromy.  Put
$L=K_{\overline k}$.  A connected component of a finite \'etale
trivializing cover is still surjective and can be dominated by a connected
finite \'etale Galois cover
\[
        q\colon T\longrightarrow\overline X,
        \qquad G=\operatorname{Aut}_{\overline X}(T).
\]
Let $\mathscr C_q$ be the full subcategory of
$\Isoc^\star(\overline X/L)$ consisting of objects whose pullback to
$T$ is trivial.  Since $T$ is connected and $\overline k$ is
algebraically closed, $\operatorname{End}(\mathbf 1_T)=L$.  For
$M\in\mathscr C_q$, set
\[
        W_M=\operatorname{Hom}_{\Isoc^\star(T/L)}
                    (\mathbf 1_T,q^*M).
\]
The evaluation morphism
\[
        \mathbf 1_T\otimes_L W_M\xrightarrow{\sim}q^*M
\]
is an isomorphism.  The canonical $G$-linearization of $q^*M$ gives
an $L$-linear action of $G$ on $W_M$.

Conversely, a representation $W$ of $G$ equips
$\mathbf 1_T\otimes_L W$ with a descent datum along $q$.  Indeed,
\[
        T\times_{\overline X}T\simeq\coprod_{g\in G}T,
\]
and the cocycle condition on this disjoint union is exactly the group law
for the action on $W$.  Effective descent produces an object of
$\mathscr C_q$.  Morphisms descend precisely when the corresponding
linear maps are $G$-equivariant.  These constructions are inverse tensor
equivalences, giving
\[
\refstepcounter{equation}
        \mathscr C_q\simeq\operatorname{Rep}_L(G).
        \label{eq:galois-descent-representations}
\tag{\theequation}
\]
The subcategory $\mathscr C_q$ is closed under subobjects and quotients
in $\Isoc^\star(\overline X/L)$: pullback is exact, and a subquotient of
a trivial object in a Tannakian category is again trivial.  It is also
closed under finite direct sums, tensor products and duals.  Hence
$\langle\overline E\rangle^\otimes$ corresponds under
\eqref{eq:galois-descent-representations} to the full Tannakian subcategory
generated by the representation
\[
        \rho\colon G\longrightarrow\operatorname{GL}_L(W_{\overline E}).
\]
Choose $t\in T(\overline k)$ above $\overline x$.  Evaluation at
$t$ identifies $W_M$ with $\omega_{\overline x}(M)$, compatibly
with tensor products.  The inclusion of this full Tannakian subcategory
therefore gives a faithfully flat homomorphism
\[
        G_L\longrightarrow G_{\mathrm{geom}}(E,\overline x)
\]
by \cite[Prop.~2.21(a)]{DM82}, where $G_L$ is the constant
finite group scheme over $L$.  Its kernel is
$(\ker\rho)_L$, since $\overline E$ is a tensor generator.  Thus
\[
        G_{\mathrm{geom}}(E,\overline x)
             \simeq (G/\ker\rho)_L
\]
is a constant finite group scheme.
This shows that $E$ is potentially isotrivial.
\end{proof}

For proper varieties, finite geometric monodromy is also sufficient.
Lazda--P\'al prove finite \'etale trivializability in the smooth
projective case.  An alteration and
Proposition~\ref{prop:proper-trivialization} extend their result to proper,
geometrically connected, geometrically unibranch varieties.

\begin{thm}[Proper-cover criterion for potential isotriviality]
\label{thm:proper-cover-potential-isotrivial}
Let $X$ be a proper, geometrically connected, geometrically unibranch
$k$-variety,
let $\star\in\{\dagger,\dagger\dagger\}$, and let
$E\in\mathscr C^\star(X)$.  Then the following are equivalent:
\begin{enumerate}
    \item $E$ is potentially isotrivial;
    \item there are a $k$-scheme $Y$ and a proper surjective morphism
    $f\colon Y\to X$ such that $f^*E$ is geometrically trivial.
\end{enumerate}
\end{thm}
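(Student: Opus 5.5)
The implication (2)$\Rightarrow$(1) is already contained in Theorem~\ref{thm:proper-cover-geometric-triviality}: a proper surjection making $f^*E$ geometrically trivial yields a constant finite geometric monodromy group. Properness of $X$ is not needed for this direction. The real content is therefore (1)$\Rightarrow$(2). By Theorem~\ref{thm:proper-cover-geometric-triviality} it suffices to produce a proper surjection $\overline f\colon Y'\to\overline X$ over $\overline k$ with $\overline f^*\overline E$ trivial, or directly a finite \'etale trivializing cover of $\overline X$. Condition (1) of that theorem then holds, and (3) follows.

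I would first work over $\overline k$. Replace $X$ by $\overline X$, $E$ by $\overline E$ and $K$ by $L=K_{\overline k}$. Since $\overline X$ is connected and geometrically unibranch, its reduction is irreducible, and we may pass to $X_{\mathrm{red}}$. Choose an alteration $a\colon X_1\to\overline X_{\mathrm{red}}$ in de Jong's sense, with $X_1$ smooth, projective and connected, and $a$ proper, surjective and generically finite. Because $\overline X$ is proper, $X_1$ can indeed be taken projective rather than merely quasi-projective. Pullback is a tensor functor, so $a^*$ maps $\langle\overline E\rangle^\otimes$ onto $\langle a^*\overline E\rangle^\otimes$ up to subquotients. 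It therefore induces a closed immersion
\[
G_{\mathrm{geom}}(a^*\overline E,x_1)\hookrightarrow G_{\mathrm{geom}}(\overline E,a(x_1))
\]
by \cite[Prop.~2.21(b)]{DM82}. To check this I need that every object of $\langle a^*\overline E\rangle^\otimes$ is a subquotient of an $a^*$ of an object of $\langle\overline E\rangle^\otimes$, which holds by construction of the generated subcategory. Hence $a^*\overline E$ also has finite monodromy.

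Next I apply Lazda--P\'al \cite[Thm.~13.4]{LP17} on the smooth projective $X_1$. The hypotheses need checking here: their theorem may be stated for overconvergent isocrystals. On a proper variety, however, convergent and overconvergent isocrystals coincide, so both cases $\star\in\{\dagger,\dagger\dagger\}$ are covered. The theorem gives a finite \'etale cover $b\colon X_2\to X_1$ trivializing $a^*\overline E$. If their statement only yields a finite \'etale cover after a finite extension of the base field, this is harmless over $\overline k$. Then $f=a\circ b\colon X_2\to\overline X$ is proper and surjective with $f^*\overline E$ trivial. Proposition~\ref{prop:proper-trivialization} then produces a finite \'etale cover of $\overline X$ trivializing $\overline E$, which is condition (1) of Theorem~\ref{thm:proper-cover-geometric-triviality}. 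That theorem supplies a finite \'etale $v\colon Y\to X$ over $k$ with $v^*E$ geometrically trivial, which in particular gives (2).

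The main obstacle is matching the input required by Lazda--P\'al with what finite geometric monodromy provides. Their criterion may be phrased as finiteness of the monodromy of the object on $X_1$, or as a Tannakian condition that $\langle a^*\overline E\rangle^\otimes$ is finite. I would translate it via the closed-immersion argument above. If Lazda--P\'al require a field of definition that is finitely generated or a Frobenius structure, I would instead descend $X_1$ and the object to a finite extension of $k$, apply their theorem there, and base change back. This is possible because the cover they produce is finite \'etale, and such covers persist under extension to $\overline k$.
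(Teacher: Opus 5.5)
Your proposal is correct and follows essentially the same route as the paper. You pass to $\overline X_{\mathrm{red}}$ and take a de Jong alteration $a$, which is smooth projective because $\overline X$ is proper, so the dense open immersion into the regular projective variety is an isomorphism. You then note that the monodromy group of $a^*\overline E$ is a closed, hence finite, subgroup of $G_{\mathrm{geom}}(E,\overline x)$; your use of \cite[Prop.~2.21(b)]{DM82} is exactly the paper's \emph{scheme-theoretic image} remark. Finally you apply \cite[Thm.~13.4]{LP17} on the alteration, where convergent and overconvergent objects coincide, and conclude with Proposition~\ref{prop:proper-trivialization} and Theorem~\ref{thm:proper-cover-geometric-triviality}.

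Your final contingency paragraph is unnecessary, since the paper applies Lazda--P\'al directly over $(\overline k,K_{\overline k})$ with no Frobenius structure. That fallback would not have produced a Frobenius structure in any case.
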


\begin{proof}
Suppose that $E$ is potentially isotrivial.  Work with
$(\overline X,\overline E)$ over the coefficient pair
$(\overline k,K_{\overline k})$, and first replace $\overline X$ by its
reduction; this is harmless by topological invariance, and the reduction is
integral.  De Jong's theorem \cite[Thm.~4.1]{dJ96} gives a connected
alteration
\[
             a\colon X'\longrightarrow\overline X
\]
and a dense open immersion of $X'$ into a regular projective variety.
Since $a$ and $\overline X$ are proper, $X'$ is proper over
$\overline k$; this dense open immersion is therefore also closed and is
an isomorphism.  Thus $X'$ is projective and, since $\overline k$ is
perfect, smooth.  Choose
$\overline x'\in X'(\overline k)$ and put
$\overline x=a(\overline x')$.  The monodromy group of
$a^*\overline E$ is the scheme-theoretic image of the induced map into
$G_{\mathrm{geom}}(E,\overline x)$; it is therefore a closed finite
subgroup.  By
\cite[Thm.~13.4]{LP17}, applied over
$(\overline k,K_{\overline k})$, a finite \'etale cover
$b\colon Z\to X'$ trivializes $a^*\overline E$.  Lazda--P\'al's theorem
applies to both categories appearing here, since convergent and
overconvergent isocrystals coincide on the smooth projective variety $X'$:
with empty boundary the overconvergence condition imposes no additional
requirement \cite[Sections~2.3.2(i) and 2.3.6]{Berthelot96}.
The composite $a\circ b\colon Z\to\overline X$ is proper and surjective,
so Proposition~\ref{prop:proper-trivialization} supplies a finite \'etale
cover of $\overline X$ trivializing $\overline E$.  Theorem
\ref{thm:proper-cover-geometric-triviality} now gives (2).  The reverse
implication follows from the same theorem.
\end{proof}

\begin{rmk}[The finite-\'etale realization question]
\label{rmk:finite-monodromy-realization}
For a general geometrically unibranch $X$, Theorem
\ref{thm:proper-cover-geometric-triviality} proves that the analogue of
condition~(2) in Theorem
\ref{thm:proper-cover-potential-isotrivial} implies potential
isotriviality.  The converse requires a separate realization statement: an abstract
finite Tannaka group does not, by itself, construct a finite \'etale cover
of the underlying variety.  Indeed, for overconvergent
isocrystals without Frobenius, the question whether finite monodromy implies
finite \'etale trivializability is explicitly raised in
\cite[Rmk.~7.26]{Pal2022}.  By Theorem
\ref{thm:proper-cover-geometric-triviality}, this finite-\'etale realization
question is precisely the converse implication in the nonproper case.
We do not assert that converse for arbitrary nonproper $X$.
\end{rmk}

For overconvergent isocrystals, relative properness of the given cover can
also be omitted in the implication towards potential isotriviality.

\begin{cor}[Dominant covers in the overconvergent case]
\label{cor:dominant-cover-potential-isotrivial}
Let $X$ be a geometrically connected, geometrically unibranch
$k$-variety, and let $E\in\Isoc^{\dagger\dagger}(X/K)$.  If there is a
dominant morphism of $k$-varieties $f\colon Y\to X$ such that $f^*E$
is geometrically trivial, then $\overline E$ is trivialized by a finite
\'etale cover of $\overline X$.  In particular, $E$ is potentially
isotrivial.
\end{cor}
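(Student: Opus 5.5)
We reduce to Theorem~\ref{thm:proper-cover-geometric-triviality}, specifically its implication $(3)\Rightarrow(1)$. The plan is to replace the dominant morphism $f$ by a proper surjection onto $X$ after extending the trivialization across a compactification of $Y$. The input that makes this possible in the overconvergent setting is that restriction to a dense open subset is fully faithful \cite[Thm.~5.2.1]{Kedlaya2007} (Kedlaya's full faithfulness for overconvergent isocrystals). This is also why the convergent case is excluded.

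First, we pass to $\overline k$ and work with $\overline X$, $\overline E$ and $\overline f\colon Y_{\overline k}\to\overline X$. We choose an irreducible component $Y_1$ of $Y_{\overline k}$, with its reduced structure, that dominates $\overline X$; this is possible since $\overline X$ is irreducible after reduction. Geometric triviality means that $\overline f^*\overline E$ is trivial on $Y_1$. By Nagata we compactify $\overline f|_{Y_1}$ as $Y_1\subset\widetilde Y\xrightarrow{\widetilde f}\overline X$, with $\widetilde f$ proper and $Y_1$ dense open in $\widetilde Y$. Since the image of $\widetilde f$ is closed and contains a dense subset, $\widetilde f$ is surjective. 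We may take $\widetilde Y$ integral.

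Next, we show that $\widetilde f^*\overline E$ is trivial on $\widetilde Y$. This is the main obstacle. Put $G=\widetilde f^*\overline E$. Its restriction to $Y_1$ is trivial, so $\operatorname{Hom}(\mathbf 1_{Y_1},G|_{Y_1})$ has dimension $n$. Full faithfulness of restriction $\Isoc^{\dagger\dagger}(\widetilde Y)\to\Isoc^{\dagger\dagger}(Y_1)$ lifts each of the $n$ sections, giving a morphism $\mathbf 1_{\widetilde Y}^{\oplus n}\to G$ that restricts to an isomorphism on $Y_1$. Its inverse on $Y_1$ also lifts by fullness. Faithfulness then makes both composites the identity, so the lift is an isomorphism. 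The caveat is that full faithfulness is known for smooth varieties, or more generally for restriction to dense opens of irreducible varieties; $\widetilde Y$ may be singular. If needed, we first replace $\widetilde Y$ by a de~Jong alteration $\widetilde Y'\to\widetilde Y$ with $\widetilde Y'$ smooth. Its preimage $Y_1'$ of $Y_1$ is dense open, and the composite $\widetilde Y'\to\overline X$ is still proper and surjective. We then apply full faithfulness on the smooth $\widetilde Y'$.

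Finally, Proposition~\ref{prop:proper-trivialization}, applied over $(\overline k,K_{\overline k})$ to the proper surjection $\widetilde Y'\to\overline X$, yields a finite \'etale cover of $\overline X$ trivializing $\overline E$. This is condition (1) of Theorem~\ref{thm:proper-cover-geometric-triviality}, which then gives that $G_{\mathrm{geom}}(E,\overline x)$ is finite, so $E$ is potentially isotrivial.
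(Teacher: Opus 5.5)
Your proof is correct and follows the paper's route. You Nagata-compactify a dominating component of $Y_{\overline k}$, extend the trivialization and its inverse across the compactification using full faithfulness of restriction to a dense open, and apply Proposition~\ref{prop:proper-trivialization} to the resulting proper surjection.

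The only difference is how you obtain the regularity that full faithfulness needs. The paper normalizes, so that $Y_1$ becomes a dense open in a normal integral $P$, and then uses Tsuzuki's result for normal varieties \cite[Cor.~1.2]{Tsu12}. You instead pass to a de~Jong alteration \cite[Thm.~4.1]{dJ96} and use Kedlaya's theorem for smooth varieties. Proposition~\ref{prop:proper-trivialization} only requires a proper surjection, so the generically finite alteration costs nothing. Your version uses the more classical smooth-case input but needs a heavier geometric tool; the paper's stays birational to $Y_1$.

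The alteration step is not optional, however. Your parenthetical claim that full faithfulness holds for dense opens of arbitrary irreducible varieties is false. In Example~\ref{ex:nodal-potential-isotrivial-descent} the nodal curve $X$ is irreducible; let $U$ be the complement of the node.
\begin{itemize}
\item Since $\nu$ is an isomorphism over $U$, we have $E|_U\simeq\mathbf 1_U$, so $\operatorname{Hom}(\mathbf 1_U,E|_U)=K$.
\item On the other hand, $\operatorname{Hom}(\mathbf 1_X,E)=0$. A nonzero morphism between rank-one objects on a connected variety is an isomorphism, because kernel and cokernel have rank zero. And $E$ is nontrivial.
\end{itemize}
So restriction from $X$ to $U$ is not full. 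The correct extension beyond the smooth case is to normal varieties, which is why the paper normalizes. Since your $\widetilde Y$ is only known to be integral, you must alter (or normalize) before extending the trivialization.
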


\begin{proof}
Work over $\overline k$.  Since $\overline X$ is connected and
geometrically unibranch, its reduction is integral.  Choose a reduced
irreducible component $Y_0$ of $Y_{\overline k}$ dominating
$\overline X$, and let $Y_1\to Y_0$ be its normalization.  The
pullback of $\overline E$ to $Y_1$ is trivial.

Apply Nagata compactification to $Y_1\to\overline X$, take the reduced
closure of $Y_1$ in the compactification, and normalize.  Since $Y_1$
is normal, this gives a dense open immersion $j\colon Y_1\hookrightarrow P$
into a normal integral variety and a proper morphism
$g\colon P\to\overline X$ extending $Y_1\to\overline X$.
The morphism $g$ is dominant, hence surjective.

The overconvergent isocrystal $g^*\overline E$ is trivial on $Y_1$.
Restriction from $P$ to $Y_1$ is fully faithful by
\cite[Cor.~1.2]{Tsu12}, since $P$ is normal and $Y_1$ is dense.
To apply the cited statement for pairs, choose a compactification of
$P$ over $\overline k$ and use it as a common compactification of $P$
and $Y_1$.  The trivialization and its inverse therefore extend uniquely
to $P$; faithfulness shows that their compositions are identities.
Thus $g^*\overline E$ is trivial, and
Proposition~\ref{prop:proper-trivialization} applied to
$g\colon P\to\overline X$ proves the claim.
\end{proof}

Let $X$ be a proper, connected, geometrically unibranch variety over
an \emph{algebraically closed field} $k$ of positive characteristic, and let
	$x\in X(k)$. For $\star\in\{\dagger,\dagger\dagger\}$, let  $\pi_1^{\star}(X/K, x)$
be the fundamental group scheme corresponding to $\operatorname{Isoc}^{\star}(X/K)$ with the fiber functor defined by $x$.
The following corollary is a generalization of \cite[Thm.~13.3]{LP17} that deals with smooth projective varieties. This is also a variant of \cite[Prop.~4.4]{Crew92}.

\begin{cor}\label{cor:isocrystal-component-group}
	 	The profinite completion \(\pi_0(\pi_1^{\star}(X/K, x))\) of the affine $K$-group scheme 
	\(
    \pi_1^{\star}(X/K, x)\) is the 
    pro-constant
	$K$-group scheme associated with
	$\pi_1^{\et}(X,x)$.
\end{cor}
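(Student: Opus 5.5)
The plan is to identify the category of representations of $\pi_0(\pi_1^{\star}(X/K,x))$ with a full subcategory of $\operatorname{Isoc}^{\star}(X/K)$, and to match that subcategory with continuous representations of $\pi_1^{\et}(X,x)$. By the standard Tannakian description \cite[Prop.~2.21 and 2.22]{DM82}, $\operatorname{Rep}_K$ of the maximal pro-finite quotient $\pi_0(\pi_1^{\star})$ is the full subcategory $\mathscr F\subset\operatorname{Isoc}^{\star}(X/K)$ of objects $E$ whose monodromy group $\operatorname{Aut}^{\otimes}(\omega_x|_{\langle E\rangle^\otimes})$ is finite. Here $k=\overline k$, so this is exactly the potentially isotrivial objects. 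Similarly, $\operatorname{Rep}_K$ of the pro-constant group attached to $\pi_1^{\et}(X,x)$ is the category of $K$-representations of $\pi_1^{\et}(X,x)$ that factor through a finite quotient. It therefore suffices to construct a tensor equivalence between $\mathscr F$ and this category, compatible with the fibre functors $\omega_x$ and the forgetful functor.

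First I construct a functor from finite-quotient representations to $\mathscr F$. Let $q\colon T\to X$ be a connected finite étale Galois cover with group $G$, pointed by some $t$ above $x$. Then $\pi_1^{\et}(X,x)\twoheadrightarrow G$. The descent argument in the proof of Theorem~\ref{thm:proper-cover-geometric-triviality} gives a tensor equivalence
\[
\mathscr C_q\simeq\operatorname{Rep}_K(G),
\]
see \eqref{eq:galois-descent-representations}, where $\mathscr C_q$ is the full subcategory trivialized by $q$. That argument is valid over any algebraically closed $k$ with $L=K$. Under this equivalence, evaluation at $t$ matches $\omega_x$. For a morphism of pointed Galois covers $T'\to T$, one has $\mathscr C_q\subset\mathscr C_{q'}$. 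This inclusion corresponds to inflation along $G'\twoheadrightarrow G$, because both arise from the same canonical linearizations. Passing to the filtered colimit over pointed Galois covers yields a fully faithful exact tensor functor
\[
\varinjlim_q\operatorname{Rep}_K(G_q)\longrightarrow\operatorname{Isoc}^{\star}(X/K),
\]
compatible with fibre functors. Its essential image is $\bigcup_q\mathscr C_q$.

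Next I show that $\bigcup_q\mathscr C_q=\mathscr F$. The inclusion $\subseteq$ is the monodromy statement of Theorem~\ref{thm:proper-cover-geometric-triviality}. The inclusion $\supseteq$ is the step I expect to carry the weight, since for nonproper $X$ it is open (Remark~\ref{rmk:finite-monodromy-realization}). Here, however, $X$ is proper, connected and geometrically unibranch. So Theorem~\ref{thm:proper-cover-potential-isotrivial}, or more precisely its proof via de Jong alterations, \cite[Thm.~13.4]{LP17} and Proposition~\ref{prop:proper-trivialization}, shows that every potentially isotrivial $E$ is trivialized by a finite étale cover. Taking a connected component and its Galois closure puts $E$ in some $\mathscr C_q$. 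One point needs care: both $\mathscr F$ and $\bigcup_q\mathscr C_q$ must be closed under subquotients, so that the equivalence identifies the relevant Tannakian categories. For $\bigcup_q\mathscr C_q$ this was observed in the proof of Theorem~\ref{thm:proper-cover-geometric-triviality}.

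Finally, Tannaka duality converts the equivalence $\varinjlim_q\operatorname{Rep}_K(G_q)\simeq\mathscr F$, compatible with fibre functors, into an isomorphism
\[
\pi_0\bigl(\pi_1^{\star}(X/K,x)\bigr)\simeq\varprojlim_q (G_q)_K.
\]
The right-hand side is the pro-constant group scheme attached to $\varprojlim_q G_q=\pi_1^{\et}(X,x)$. The one subtlety to check is that the transition maps agree on both sides, i.e.\ that the identifications $W_M\cong\omega_x(M)$ via the chosen points $t$ are compatible along the tower. This holds once compatible base points are chosen in the pro-system of pointed Galois covers.
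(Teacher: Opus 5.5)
Your proposal is correct and follows the paper's proof. The paper likewise builds the functor $\lambda$ from finite-quotient $K$-representations of $\pi_1^{\et}(X,x)$ into the finite-monodromy objects as the filtered colimit, over pointed Galois covers, of the Galois-descent equivalences \eqref{eq:galois-descent-representations}, and it obtains essential surjectivity from Theorem~\ref{thm:proper-cover-potential-isotrivial} together with Proposition~\ref{prop:proper-trivialization}. You also spell out points the paper leaves implicit: the Tannakian identification of $\operatorname{Rep}_K(\pi_0(\pi_1^{\star}))$ with finite-monodromy objects, closure under subquotients, and compatibility with fibre functors and base points.
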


\begin{proof}
	Let $\mathcal C_{\mathrm{fin}}$ be the full Tannakian subcategory of
	$\operatorname{Isoc}^{\star}(X/K)$ consisting of objects with finite
    monodromy. Let $\Rep_K^{\sc fq}(\pi_1^{\et}(X,x))$ denote the category of finite-dimensional \(K\)-representations factoring through a finite quotient.  
    Then equivalence \eqref{eq:galois-descent-representations} defines a fully faithful functor $\lambda\colon\Rep_K^{\sc fq}(\pi_1^{\et}(X,x))\to \mathcal C_{\mathrm{fin}}$. Indeed, for any Galois cover $q\colon Y\to X$ with Galois group $G$, \eqref{eq:galois-descent-representations} defines a natural equivalence $\Rep_K(G)\simeq \mathcal{C}_q$, where $\mathcal{C}_q\subset\mathcal{C}_{\mathrm{fin}}$ is the full subcategory consisting of objects that are trivialized by $q$. These equivalences, taken over pointed connected finite \'etale Galois covers, are compatible with refinement and define \(\lambda\) by passage to the filtered colimit. Theorem \ref{thm:proper-cover-potential-isotrivial} and Proposition \ref{prop:proper-trivialization} then show that $\lambda$ is essentially surjective. This completes the proof.
\end{proof}

\begin{ex}[The unibranch hypothesis is necessary]
\label{ex:nodal-potential-isotrivial-descent}
Assume that $k$ is algebraically closed, and let $X$ be the rational
projective nodal curve obtained from $\mathbb P^1_k$ by identifying
$0$ and $1$.  Its normalization
\[
       \nu\colon\mathbb P^1_k\longrightarrow X
\]
is finite and surjective.  Scheme-theoretically,
\[
 \mathbb P^1_k\times_X\mathbb P^1_k
 =\Delta(\mathbb P^1_k)\amalg\{(0,1)\}\amalg\{(1,0)\}.
\]
Indeed, at the node the completed local ring and its normalization are
\[
 A=k[[u,v]]/(uv),\qquad B=k[[u]]\times k[[v]].
\]
The same-branch tensor products give the two branches of the diagonal,
whereas each cross-branch tensor product is $k$.  Hence
\[
 B\otimes_A B\simeq B\times k\times k.
\]
Together with the isomorphism away from the node, faithful flatness of
completion verifies the displayed scheme-theoretic decomposition.

Choose $\lambda\in K^\times$ of infinite order.  On
$\mathbf 1_{\mathbb P^1}$, take the descent datum which is the identity
on the diagonal, multiplication by $\lambda$ on $(0,1)$, and
multiplication by $\lambda^{-1}$ on $(1,0)$.  It satisfies the cocycle
condition, so proper effective descent produces a rank-one convergent,
respectively overconvergent, isocrystal $E$ on $X$ with
$\nu^*E\simeq\mathbf 1_{\mathbb P^1}$.

The descent scalar for $E^{\otimes m}$ is $\lambda^m$.  Changing a
trivialization on $\mathbb P^1$ multiplies it by a single element of
$K^\times$, which does not change this scalar.  Since
$\lambda^m\ne 1$ for $m>0$, no positive tensor power of $E$ is
trivial.  Since $E$ has rank one, its monodromy group is a closed
subgroup of $\mathbb G_m$.  A finite such subgroup would be killed by
a positive power character and would make a positive tensor power of
$E$ trivial.  The monodromy group is therefore infinite, hence equal
to $\mathbb G_m$.  Thus $E$ is
not potentially isotrivial, although its pullback by the finite surjection
$\nu$ is trivial.  The geometrically unibranch hypothesis is therefore
essential, even when $X$, $Y$, and the covering morphism are proper.
\end{ex}

\section*{Declaration on the use of generative AI}

The authors first proved Theorem~\ref{main} in the smooth projective
case in July 2026, without the use of generative AI.
In September 2026, to assess the capabilities of generative AI, they
asked ChatGPT (OpenAI; GPT-5.6 Sol and GPT-6 Astra) to prove the theorem in full
generality. It produced a simpler proof that also applies in the
general case and forms the basis of the present paper.
The authors independently checked all mathematical arguments and
references, rewrote the generated material, and take full
responsibility for the contents of the article.

\section*{Acknowledgements}
The first author would like to thank  Sun Yat-Sen University for its hospitality during his stay in July 2026.
The first author was partially supported by National Science Centre, Poland (NCN),
contract number 2025/59/B/ST1/02168.  The second author was partially supported by the General Program of the National Natural Science Foundation of China (Grant No.~12671059) and the Guangdong Basic and Applied Basic Research Foundation (Grant No.~2025A1515012175).
\printbibliography
\end{document}